\newtheorem{prop}{Proposition}
\newtheorem{theorem}{Theorem}
\newtheorem{corollary}{Corollary}
\newtheorem{defin}{Definition}
\newtheorem{remark}{Remark}
\def\PROB {{\mathbb P}}
\def\EXP {{\mathbb E}}
\def\Z{{\mathbb Z}}
\def\R{{\mathbb R}}
\def\bX{{\bf X}}
\def\bA{{\bf A}}
\def\bB{{\bf B}}
\def\bC{{\bf C}}
\newcommand{\ZZ}{\mathbb{Z}}
\newcommand{\NN}{\mathbb{N}}
\begin{document}
\title{On the strong stability of ergodic iterations
\thanks{Attila Lovas and Mikl\'os R\'asonyi were supported by the National Research, Development and Innovation Office within the framework of the Thematic Excellence Program 2021; National Research subprogramme “Artificial intelligence, large networks, data security: mathematical foundation and applications” and also by the grant K 143529.}}
\author{
L\'aszl\'o Gy\"orfi \and Attila Lovas \and Mikl\'os R\'asonyi}

\maketitle{}

\begin{abstract}
We revisit processes generated by iterated random functions driven by a stationary and ergodic sequence.
Such a process is called strongly stable if a random initialization exists, for which the process is stationary and ergodic,
and for any other initialization the difference of the two processes converges to zero almost surely.
Under some mild conditions on the corresponding recursive map,
without any condition on the driving sequence we show the strong stability of iterations.
Several applications are surveyed such as generalized autoregression and queuing.
Furthermore, new results are deduced for Langevin-type iterations with dependent noise and for multitype branching processes.
\end{abstract}

\section{Introduction}

We are studying stochastic processes defined by iterating random functions.
For a measurable function $F: \R^d\times \R^k\to \R^d$,
consider the following iteration:
set $X_n=X_n(v)$ such that $X_{0}=v$, with a vector $v\in\R^d$ and let
\begin{align}
\label{itrec}
X_{n+1}=F(X_{n},Z_{n+1}),
\end{align}
where the driving sequence $\{Z_i\}_{1}^{\infty}$ is a stochastic process with values in $\mathbb{R}^{k}$.

In the standard setup, $\{Z_i\}_{1}^{\infty}$ is independent and identically distributed (i.i.d.) and
so $\{X_i(v)\}_{0}^{\infty}$ is a homogeneous Markov process,
see Diaconis and  Freedman \cite{DiFr99} and Iosifescu \cite{Ios09}.
Furthermore, (\ref{itrec}) is called a {\em forward iteration}. If in (\ref{itrec}), $Z_1,\dots ,Z_n$ is replaced by $Z_n,\dots ,Z_1$,
then the resulting iteration $\tilde X_n$ is called the {\em backward iteration},
see Propp and Wilson \cite{PrWi96}, \cite{PrWi98}.
Clearly, $X_n$ and $\tilde X_n$ have the same distribution for each $n$.

In the present paper the main role is played by another type of iteration, called {\it negative iteration},
defined as follows:
For a $k\le 0$, let the random double array $X^{(k)}_n=X^{(k)}_n(0)$, $k\le n$ be defined such that $X^{(k)}_{k}=0$ and
\begin{align}
\label{iLLrec}
X^{(k)}_{n+1}=F(X^{(k)}_n,Z_{n+1}),\quad n\ge k,
\end{align}
i.e., the iteration starts at negative time $k$ with initial vector $0$.
This iteration scheme is also often used and appears e.g.\ in
Borovkov \cite{Bor98},
Borovkov and Foss \cite{BorFoss93},
Debaly and Truquet \cite{DeTr21},
Diaconis and  Freedman \cite{DiFr99},
Elton \cite{Elt90},
Foss and Konstantopoulos \cite{FossKonst2003},
Gy\"orfi and  Walk \cite{GyWa96},
Gy\"orfi and  Morvai \cite{GyMo02} and
Iosifescu \cite{Ios09}.

Under mild conditions, Diaconis and  Freedman \cite{DiFr99} proved that the backward
iteration $\tilde X_n$ is almost surely (a.s.) convergent
to a random vector $V$ with a distribution $\nu$,
which implies that the forward iteration $X_n$ has the limit distribution $\nu$.
As in the standard setup of Markov chains, if $X_0$ has distribution  $\nu$ (and it is
independent from the driving sequence $\{Z_i\}_{1}^{\infty}$), then $X_n$ will be a stationary Markov process.

More general schemes have also been considered, where $\{Z_i\}_{1}^{\infty}$
is merely stationary and ergodic, see e.g.\ Debaly and Truquet \cite{DeTr21}, Elton \cite{Elt90} and Iosifescu \cite{Ios09}.
In the papers of Borovkov and Foss \cite{BorFoss93}, Foss and Konstantopoulos \cite{FossKonst2003}, and also
in the monograph Borovkov \cite{Bor98} such processes are treated under the name
``stochastically recursive sequences''. We remark for later use that, by the Doob-Kolmogorov theorem, a
stationary sequence $\{Z_i\}_{1}^{\infty}$ can always be completed to a
sequence $\{Z_i\}_{-\infty}^{\infty}$, defined on all the integer lattice $\mathbb{Z}=\{0,\pm 1,\pm 2,\dots\}$.
We assume henceforth that this completion has been carried out.

Brandt, Franken and Lisek \cite{BrFrLi90} called the stationary process  $\{X'_i\}_{-\infty}^{\infty}$
a \emph{weak solution} of the iteration, if
there exists a  $\{Z'_i\}_{-\infty}^{\infty}$ such that $(X'_i,Z'_i)$ satisfies the recursion (\ref{itrec}), and
$\{Z_i\}_{-\infty}^{\infty}$ and $\{Z'_i\}_{-\infty}^{\infty}$ having the same distribution.
$\{X_i\}_{-\infty}^{\infty}$ is called a \emph{strong solution} if it is stationary and $(X_i,Z_i)$ satisfies
the recursion (\ref{itrec}).

In this paper, we study the strong solutions.
Using the a.s. limit of the negative iteration we construct such solutions under mild conditions.
Actually, we investigate the following novel concept of \emph{strong stability}.

\begin{defin}
\label{def1}
The class of random processes $\{X_n(v), v\in\R^d\}$ is called strongly stable, if
\begin{itemize}
\item[(I)]
there exists a random vector $V^*$ such that $\{X_i(V^*)\}_{0}^{\infty}$ is
stationary and ergodic,
\item[(II)]
and for any random vector $V$, $X_n(V)-X_n(V^*)\to 0$ a.s.
\end{itemize}
\end{defin}

Note that in the definition above, the random initial vector $V$ may depend on
the entire trajectory of $\{Z_i\}_{1}^{\infty}$. As a result, the concept of
strong stability may seem overly demanding. Furthermore, for integer valued processes from (II) follows that there is a random index $\tau$ such that for all $n>\tau$, we have $X_n(V)=X_n(V^*)$. In other words, $\{X_i(V)\}_{0}^{\infty}$
is \emph{forward coupled} with $\{X_i(V^*)\}_{0}^{\infty}$. This stronger notion of stability was introduced by Lindvall \cite{Lin92}, and discussed also in Foss and Tweedie \cite{FoTw98}.
Traditional proofs establishing the existence of a unique limiting distribution for Markov chains on Polish spaces under Doeblin's minorization condition involve representing transitions through iterated i.i.d. random maps and a coupling argument \cite{BhaWay2002}. In this way, it can be shown that the iteration starting from any possibly random initial value is forward coupled with its stationary counterpart.

The aim of this paper is to show the strong stability of $\{X_n(v), v\in\R^d\}$ in great generality, for several relevant models and important applications. As in Debaly and Truquet \cite{DeTr21}, under some mild conditions on the function $F$, we show that the a.s. limiting process
\begin{align*}
X_n^*=\lim_{k\to -\infty}X^{(k)}_n
\end{align*}
exists and is stationary and ergodic.
Brandt, Franken and Lisek \cite{BrFrLi90} had similar results in the particular case
of monotonic $F$, see the definition (\ref{M2}) below.

Our main results are stated and proved in Section \ref{sec2}. Generalized autoregressions, queuing systems and
generalized Langevin dynamics are surveyed in Sections \ref{sec3}, \ref{sec4}, \ref{sec5}, respectively.
Finally, Section \ref{sec6} treats multi-type generalized Galton-Watson processes.

\section{Iterated ergodic function systems}\label{sec2}

Defining
\begin{align*}
F_n(x):=F(x,Z_{n}),\ n\in\mathbb{Z}
\end{align*}
we can write
\begin{align*}
X_n^{(k)}(v)=F_n \circ \dots \circ F_{k+1}(v),\quad k\le 0,\,\,n\geq k,
\end{align*}
where the empty composition is defined as the identity function.

In the sequel, $|\cdot|$ will refer to the standard Euclidean norm on $\mathbb{R}^{d}$.
For a function $g:\mathbb{R}^{d}\to\mathbb{R}^{d}$, set
\begin{align*}
\|g\|=\sup_{x\ne y}\frac{|g(x)-g(y)|}{|x-y|}.
\end{align*}

The following theorem is an extension of Theorem 5.1 in \cite{DiFr99}
and
Theorem 6.2 in \cite{Ios09}. It is contained in Theorem 3 of Elton \cite{Elt90}
(except for proving (II)). We provide a proof for completeness.

\begin{theorem}
\label{gen}
Assume  that  $\{Z_i\}_{-\infty}^{\infty}$  is a stationary and ergodic sequence.
Suppose that
\begin{itemize}
\item[(i)]
\[
\EXP\{ (\log \|F_1\|)^+\}<\infty,
\]
\item[(ii)]
and for some $n$,
\begin{align}
\label{Cont}
\EXP\{ \log \|F_n \circ \dots \circ F_1\|\}<0.
\end{align}
\end{itemize}
Then the class $\{X_n(v), v\in\R^d\}$ is strongly stable.
\end{theorem}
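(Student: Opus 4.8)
The plan is to exhibit the stationary ergodic process as the almost sure limit of the negative iterations $X_n^{(k)}(0)$ as $k\to-\infty$, and then to show that any other initialization merges with it. First I would establish that for fixed $n$ the sequence $\{X_n^{(k)}(0)\}_{k\le n}$ is Cauchy (hence convergent) a.s. The key estimate is that $|X_n^{(k)}(0)-X_n^{(k')}(0)|\le \|F_n\circ\cdots\circ F_{k'+1}\|\cdot|X_{k'}^{(k)}(0)-0|$ for $k\le k'\le n$, so one needs geometric decay of the composed Lipschitz constants along the trajectory together with at most subexponential growth of the orbit of $0$. Condition (ii) gives, via the subadditive (Kingman) ergodic theorem applied to $\log\|F_{jn}\circ\cdots\circ F_{(j-1)n+1}\|$ along blocks of length $n$, that $\frac1m\log\|F_m\circ\cdots\circ F_1\|\to c<0$ a.s.; by stationarity the same holds for every shifted block, so $\|F_n\circ\cdots\circ F_{k+1}\|$ decays exponentially in $n-k$. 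Condition (i) together with the Borel–Cantelli lemma controls the single-step increments $|F_{k'}(0)|\le \|F_{k'}\|\cdot 0 + |F_{k'}(0)|$, i.e. $|X_{k'}^{(k)}(0)|$ grows at most subexponentially, so the product of the decaying Lipschitz factor and the orbit bound is summable in $k'$. This yields a.s.\ convergence $X_n^{(k)}(0)\to X_n^*$.

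Next I would identify the limit. By construction $X_{n+1}^{(k)}(0)=F(X_n^{(k)}(0),Z_{n+1})$; letting $k\to-\infty$ and using continuity of $F$ (Lipschitz in the first argument, with the bound from (i)) gives $X_{n+1}^*=F(X_n^*,Z_{n+1})$, so $\{X_n^*\}$ is a strong solution. Stationarity follows because $X_n^{(k)}(0)$ is a fixed measurable functional of the shifted driving sequence $(Z_{k+1},\dots,Z_n)$ — more precisely $X_n^{(k)}(0)=\Phi_{n-k}(Z_n,\dots,Z_{k+1})$ for a map $\Phi$ not depending on $n,k$ separately — and the limit $X_n^*$ is therefore a single measurable function of $(\dots,Z_{n-1},Z_n)$ composed with the shift; stationarity and ergodicity of $\{Z_i\}$ pass to $\{X_n^*\}$ (ergodicity because the limit is measurable with respect to the invariant $\sigma$-algebra of the ergodic driving process, or by a direct factor-map argument). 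Setting $V^*=X_0^*$ establishes part (I) of Definition \ref{def1}.

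For part (II), take any random $V\in\R^d$ (possibly depending on the whole driving sequence). Then
\[
|X_n(V)-X_n(V^*)|\le \|F_n\circ\cdots\circ F_1\|\cdot|V-V^*|,
\]
and since $|V-V^*|<\infty$ a.s. while $\|F_n\circ\cdots\circ F_1\|\to 0$ exponentially a.s. by the argument above, the right-hand side tends to $0$ a.s. This is where it matters that the decay of the composed Lipschitz constants is a pathwise statement independent of the initial points, so no independence between $V$ and $\{Z_i\}$ is needed.

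The main obstacle I anticipate is the block/subadditivity bookkeeping in the first step: condition (ii) only gives a negative mean for the length-$n$ composition, so one must run Kingman's subadditive ergodic theorem on the stationary ergodic sequence $Y_j=\log\|F_{jn}\circ\cdots\circ F_{(j-1)n+1}\|$ (whose positive part is integrable by (i), and whose mean is negative by (ii)) to get $\frac1m\sum_{j\le m}Y_j\to \EXP Y_1<0$, then absorb the at most $n$ leftover factors using (i) and Birkhoff, and finally combine this geometric decay with the subexponential growth of the orbit of $0$ so that the telescoped Cauchy tail is summable. Making the "$\|F_n\circ\cdots\circ F_{k+1}\|$ decays exponentially in $n-k$ uniformly enough to be summed against the orbit growth" precise — i.e. getting the two exponential rates to have the right sign of sum — is the delicate quantitative point; everything else is a continuity/measurability routine.
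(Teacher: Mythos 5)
Your proposal follows essentially the same route as the paper's proof: a.s.\ convergence of the negative iterations via exponential decay of $\|F_0\circ\cdots\circ F_{-n+1}\|$ obtained from the subadditive (Kingman/F\"urstenberg--Kesten, here Elton's Proposition 2) ergodic theorem applied to condition (ii), a Borel--Cantelli argument to absorb the subexponential terms, a factor-map argument for stationarity and ergodicity of $X_n^*$, and the one-line Lipschitz bound $\|F_n\circ\cdots\circ F_1\|\cdot|V-V^*|\to 0$ for part (II). The only cosmetic difference is that the paper telescopes consecutive starting times (so only the one-step quantity $|F(0,Z_{-n})|$ must be controlled, rather than the whole orbit $|X_{k'}^{(k)}(0)|$ as in your Cauchy estimate), which slightly simplifies the summability bookkeeping you flag as the delicate point.
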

Notice that \eqref{Cont} is a sort of long run contraction condition here.

\begin{proof}[Proof of Theorem \ref{gen}.]
{For the stationary and ergodic process $\mathbf{Z}=\{Z_i\}_{-\infty}^{\infty}$, let $f_n$,
$n=1,2,\dots$ be vector valued functions
such that $f_i(T^i\mathbf{Z})=X_i(0)$, where $T$ stands for the shift transformation.
Let's calculate $f_n(\mathbf{Z})$.
If the process $\{X^{(k)}_n\}$ is defined in (\ref{iLLrec}), then
\begin{align*}
f_n(\mathbf{Z})=X^{(-n)}_{0}(0),
\end{align*}
i.e., $X^{(-n)}_{0}(0)$ is the value of the process at time $0$, when the process started
at negative time $-n$ with the $0$ vector.\\
We show that under the conditions (i) and (ii),
\begin{align}
\label{GAP1}
X^{(-n)}_{0}(0) \mbox{ is a.s. convergent to a random vector } V^*.
\end{align}
It will be clear that $V^{*}=f(\mathbf{Z})$ for some suitable functional $f$ so we
will in fact show
\begin{align}
\label{Br1}
f_n(\mathbf{Z})\to f(\mathbf{Z})\mbox{ a.s.}
\end{align}
As for (\ref{GAP1}), we show that this sequence is a.s. a Cauchy sequence, namely even
\begin{align*}
\sum_{n=1}^{\infty}|X^{(-n)}_{0}(0)-X^{(-n-1)}_{0}(0)|<\infty
\end{align*}
holds a.s. Notice that iterating (\ref{iLLrec}) yields
\begin{align*}
X^{(-n-1)}_{-n}(0)
&=F(X^{(-n-1)}_{-n-1}(0),Z_{-n})
=F(0,Z_{-n})
=X^{(-n)}_{-n}(F(0,Z_{-n})),
\end{align*}}
so
\begin{align*}
X^{(-n)}_{0}(0)-X^{(-n-1)}_{0}(0)
&=
X^{(-n)}_{0}(0)-X^{(-n)}_{0}(F(0,Z_{-n}))\\
&= F_{0} \circ \dots \circ F_{-n+1}(0-F(0,Z_{-n})).
\end{align*}
Thus,
\begin{align*}
|X^{(-n)}_{0}(0)-X^{(-n-1)}_{0}(0)|
&\le \|F_{0} \circ \dots \circ F_{-n+1} \|\cdot |F(0,Z_{-n})|.
\end{align*}
We will show that the ergodicity of $\{Z_i\}_{-\infty}^{\infty}$  together with (i) and (ii) implies
\begin{align}
\label{**}
\sum_{n=1}^{\infty}\|F_{0} \circ \dots \circ F_{-n+1} \|\cdot |F(0,Z_{-n})|<\infty
\end{align}
a.s. and so (\ref{GAP1}) is verified.\\
In the sequel, the key ingredient is Proposition 2 in Elton \cite{Elt90}, which is the extension of
F\"urstenberg and  Kesten  \cite{FuKe60}.
For proving (\ref{**}), note that by Proposition 2 in \cite{Elt90}, the condition (\ref{Cont}) implies that
the sequence
\begin{align*}
E_n
:= \frac 1n \EXP\{\log \Vert F_{0} \circ \dots \circ F_{-n+1} \Vert \}
\to E,\ n\to\infty
\end{align*}
with $E<0$ such that
\begin{align}
\label{EE}
\frac 1n \log \Vert F_{0} \circ \dots \circ F_{-n+1} \Vert
\to E
\end{align}
a.s.
Note that $E$ is called Lyapunov exponent.
Next, we argue as in Proposition 6.1 in Iosifescu \cite{Ios09}.
One has that
\begin{align*}
\Vert F_{0} \circ \dots \circ F_{-n+1} \Vert
&=
e^{n \frac 1n \log \Vert F_{0} \circ \dots \circ F_{-n+1} \Vert}.
\end{align*}
(\ref{EE}) implies that there are a random integer $n_0$ and $a>0$ such that, for all $n\ge n_0$,
\begin{align*}
\frac 1n \log \Vert F_{0} \circ \dots \circ F_{-n+1} \Vert
&\le
-a<0.
\end{align*}
Thus
\begin{align*}
\sum_{n=n_0}^{\infty}\|F_{0} \circ \dots \circ F_{-n+1} \|\cdot |F(0,Z_{-n})|
&\le
\sum_{n=n_0}^{\infty}|F(0,Z_{-n})|e^{-na}.
\end{align*}
Since $\lambda\ln^{+}|F(0,Z_{-1})|$ is integrable for all $\lambda>0$, it follows
that $$
\sum_{n=1}^{\infty}P(\lambda\ln |F(0,Z_{-1})|>n)<\infty.{}
$$ Applying this observation
for $\lambda:=2/\alpha$ the Borel-Cantelli lemma implies that $|F(0,Z_{-1})|<e^{na/2}$ holds except for finitely many $n$
almost surely, which
implies $\sum_{n=n_0}^{\infty}|F(0,Z_{-n})|e^{-na}<\infty$ almost surely.\\
Because of $X_i(V^*)=f(T^i\mathbf{Z})$, $\{X_i(V^*)\}_{0}^{\infty}$ is stationary and ergodic so (I) in the Definition \ref{def1} is proved.
Furthermore,
\begin{align*}
\|X_n(V)-X_n(V^*)\|
&\le \|F_{n} \circ \dots \circ F_{1} \|\cdot |V-V^*|
\to 0
\end{align*}
a.s., as before. Thus, (II) is verified, too.
\end{proof}

Theorem \ref{gen} applies, in particular, under the one-step contraction condition \eqref{con} below.
\begin{prop}
\label{itthm}
Assume  that  $\{Z_i\}_{-\infty}^{\infty}$  is a stationary and ergodic sequence
such that the distribution of $Z_{1}$ is denoted by $\mu$.
Suppose that
\[
\int |F(0,z)|\mu(dz)<\infty,
\]
and
\[
|F(x,z)-F(x',z)|\le K_z |x-x'|
\]
with
\begin{align}
\label{con}
\EXP\{ \log K_{Z_1}\}<0.
\end{align}
Then, the class $\{X_n(v), v\in\R^d\}$ is strongly stable.
\end{prop}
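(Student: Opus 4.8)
The plan is to deduce Proposition \ref{itthm} from Theorem \ref{gen} by verifying hypotheses (i) and (ii) of that theorem. The key observation is that the Lipschitz-type bound $|F(x,z)-F(x',z)|\le K_z|x-x'|$ says exactly that $\|F_1\| = \|F(\cdot,Z_1)\| \le K_{Z_1}$, and more generally, by the submultiplicativity of the operator norm $\|\cdot\|$ under composition, $\|F_n\circ\dots\circ F_1\| \le \prod_{i=1}^n K_{Z_i}$, hence $\log\|F_n\circ\dots\circ F_1\| \le \sum_{i=1}^n \log K_{Z_i}$.

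First I would check (i): from $\|F_1\|\le K_{Z_1}$ we get $(\log\|F_1\|)^+ \le (\log K_{Z_1})^+$, so it suffices that $\EXP\{(\log K_{Z_1})^+\}<\infty$. This follows from $\EXP\{\log K_{Z_1}\}<0$ together with the fact that $(\log K_{Z_1})^+$ and $(\log K_{Z_1})^-$ cannot both have infinite expectation when the expectation $\EXP\{\log K_{Z_1}\}$ is well-defined and finite; indeed \eqref{con} presupposes $\log K_{Z_1}$ is quasi-integrable with finite negative mean, which forces $\EXP\{(\log K_{Z_1})^+\}<\infty$. (One should state this reading of \eqref{con} explicitly, or alternatively note that if $\EXP\{(\log K_{Z_1})^+\}=\infty$ the hypothesis \eqref{con} would be vacuous/ill-posed, so we interpret it as entailing integrability of the positive part.)

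Next I would check (ii) with the value $n=1$. By stationarity of $\{Z_i\}$ and the bound above, $\EXP\{\log\|F_1\|\} \le \EXP\{\log K_{Z_1}\} < 0$, which is precisely \eqref{Cont} for $n=1$. Finally I would verify the side condition of Theorem \ref{gen} that is used implicitly in its proof, namely the integrability needed to control $|F(0,Z_{-1})|$: here $\int|F(0,z)|\mu(dz)<\infty$ means $\EXP|F(0,Z_1)|<\infty$, which in particular gives $\EXP\{(\log|F(0,Z_1)|)^+\}<\infty$ and hence $\lambda\ln^+|F(0,Z_{-1})|$ is integrable for every $\lambda>0$, exactly as required in the Borel--Cantelli step of the proof of Theorem \ref{gen}. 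With (i), (ii), and this integrability in hand, Theorem \ref{gen} applies and yields the strong stability of $\{X_n(v),v\in\R^d\}$.

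The only genuine subtlety — hardly an obstacle — is the bookkeeping around the expectation $\EXP\{\log K_{Z_1}\}$: one must be careful that it is a well-defined (possibly $-\infty$, but here finite and negative) quantity and that this forces $\EXP\{(\log K_{Z_1})^+\}<\infty$, which is what powers hypothesis (i). Everything else is a direct substitution into Theorem \ref{gen}; no new analytic ideas are needed, since the Furstenberg--Kesten / Elton machinery has already done the heavy lifting at the level of the general theorem. \qed
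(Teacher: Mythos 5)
Your proposal is correct and follows essentially the same route as the paper: both deduce the Proposition from Theorem \ref{gen} via the submultiplicativity bound $\|F_n\circ\dots\circ F_1\|\le\prod_i\|F_i\|\le\prod_i K_{Z_i}$ together with stationarity, so that \eqref{con} yields \eqref{Cont}. You are somewhat more explicit than the paper in verifying hypothesis (i) and the integrability of $|F(0,Z_1)|$ used in the Borel--Cantelli step, but these are bookkeeping points, not a different argument.
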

\begin{proof}
This proposition is an easy consequence of Theorem \ref{gen}, since
\begin{align*}
\EXP\{ \log \|F_n \circ \dots \circ F_1\|\}
&\le
\EXP\{ \log (\|F_n\| \cdots \| F_1\|)\}\\
&=
n\EXP\{ \log \| F_1\|\}\\
&\le
n\EXP\{ \log K_{Z_1}\}\\
&< 0.
\end{align*}
\end{proof}

\begin{defin}
We say that the class of strongly stable random processes $\{X_n(v), v\in\R^d\}$ satisfies the strong
law of large numbers (SLLN), if
$\EXP\{V^*\}$ is well-defined and finite, and for any $v\in\R^d$,
\[
\lim_n \frac 1n \sum^{n}_{i=1} X_{i}(v)=\EXP\{V^*\}
\]
a.s.
\end{defin}

\begin{remark} Under the conditions of Theorem \ref{gen},
if $V^*$ has a well-defined and finite expectation $\EXP\{V^*\}$ then we have SLLN:
\begin{align*}
\left|\frac 1n \sum^{n}_{i=1} X_{i}(v)-\EXP\{V^*\}\right|
&\le
\left|\frac 1n \sum^{n}_{i=1} X_{i}(V^*)-\EXP\{V^*\}\right|
+
\frac 1n \sum^{n}_{i=1} |X_{i}(v)- X_{i}(V^*)|.
\end{align*}
By Birkhoff's ergodic theorem, the first term on the right
hand side tends to $0$ a.s., while the a.s. convergence of the second term follows from (II).
\end{remark}

\begin{remark} Now we discuss some conditions guaranteeing $\EXP\{|V^{*}|\}<\infty$.
By Fatou's lemma and the triangle inequality, we can write
$$
\EXP \{|V^*|\}
\le
\liminf_{n\to\infty}\EXP \{|X_0^{(-n)}(0)|\}
{\le}
\sum_{k=0}^{\infty} \EXP \{| X_0^{(-k)}(0) - X_0^{(-k+1)}(0)|\}
$$
which we can estimate further and obtain
$$
\EXP \{|V^*|\} \le
\sum_{k=0}^{\infty} \EXP \{\lVert F_{0}\circ\ldots\circ F_{-k+1}\rVert\cdot | F(0,Z_{-k})|\}.
$$
For the sake of simplicity, assume for the moment that $z\mapsto |F(0,z)|$ is bounded by some
constant $C$.
By stationarity, it is enough to investigate $\EXP \{\lVert F_{k}\circ\ldots\circ F_{1}\rVert\}$.
Here, either we can prescribe a ``contractivity in the long run''-type condition like
\begin{equation}\label{eq:longrun}
\limsup_{k\to\infty}\EXP^{1/k} \{\lVert F_{k}\circ\ldots\circ F_{1}\rVert\}<1,
\end{equation}
and then the $n$-th root test gives the desired result (that is, $\EXP\{|V^{*}|\}<\infty$), or,
by H\"older's inequality, we can write
$$
\EXP \{\lVert F_{k}\circ\ldots\circ F_{1}\rVert\} \le \prod_{j=1}^{k}  \EXP^{1/k}\{\lVert F_{j}\rVert^k\} = \EXP \{\lVert F_{1}\rVert^k\},
$$
and thus we have
$$\EXP \{|V^*|\} \le \sup_{z}|F(0,z)| \EXP\left\{\sum_{k=1}^{\infty}\lVert F_{1}\rVert^k\right\}
\leq C
\EXP\left\{\frac{\lVert F_{1}\rVert}{1-\lVert F_{1}\rVert}\right\}.
$$
We should assume here that $\lVert F_{1}\rVert<1$ a.s.,
moreover $\EXP\left\{\frac{\lVert F_{1}\rVert}{1-\lVert F_{1}\rVert}\right\}<\infty$
hence this approach looks much more restrictive than requiring \eqref{eq:longrun}.\\
As pointed out in Truquet \cite{TruquetMCRE2021}, the above long-time contractivity
condition \eqref{eq:longrun} is stronger than \eqref{Cont} in Theorem \ref{gen} or, equivalently, \eqref{EE}.
On the other hand, in the i.i.d. case, \eqref{eq:longrun} reduces to $\EXP\{\lVert F_1\rVert\}<1$.
If $\EXP\{\lVert F_1\rVert\}<1$ fails in the i.i.d. case then $E[V^{*}]=\infty$ can easily happen,
see the example in Remark \ref{honig} below.
\end{remark}

\begin{remark}\label{honig}
Concerning the strong law of large numbers, we should have to verify
\begin{align}
\label{GAP2}
\EXP\{\sup_n|X^{(-n)}_{0}(0)|\}<\infty.
\end{align}
Note that $X^{(0)}_{0}(0)=0$, and thus
\begin{align*}
\sup_n |X^{(-n)}_{0}(0)|\le
\sum_{k=1}^{\infty}|X^{(-k+1)}_{0}(0)-X^{(-k)}_{0}(0)|.
\end{align*}
One possibility would be to check
\begin{align*}
\sum_{k=0}^{\infty}\EXP\left\{|X^{(-k+1)}_{0}(0)-X^{(-k)}_{0}(0)|\right\}
\le
\sum_{k=0}^{\infty}\EXP\left\{|F(0,Z_{-k})|\prod_{j=-k+1}^0K_{Z_j}\right\}
<\infty.
\end{align*}
Here is a counterexample, however, that this is not true in general.
Let $d=k=1$ and $\{Z_i\}_{i\in\Z}$ be i.i.d. such that
\begin{align*}
\PROB \{Z_0=e^{-2}\}=2/3\quad \mbox{and}\quad \PROB \{Z_0=e^2\}=1/3.
\end{align*}
Put  $X_{0}=v>0$ and
\begin{align*}
X_{n+1}=Z_{n+1}\cdot X_{n}.
\end{align*}
Clearly $K_{Z_0}=Z_0$ with $\EXP \{\log K_{Z_0}\}=-2/3<0$ and so the conditions of Proposition \ref{itthm} are satisfied.
Furthermore, $\EXP \{K_{Z_0}\}=\EXP\{Z_{1}\}=2/3 e^{-2}+1/3 e^2>1$.
By independence,
\begin{align*}
\EXP\{X_{n+1}\}
&=\EXP\{Z_{n+1}\}\cdot \EXP\{X_{n}\}\\
&=\EXP\{Z_{1}\}^{n+1}\cdot v\\
&=(2/3 e^{-2}+1/3 e^2)^{n+1}\cdot v\\
&\to \infty,
\end{align*}
as $n\to\infty$.
Therefore, $\EXP\{V^*\}=\infty$.
\end{remark}

{Next, the contraction condition of Theorem \ref{gen} is replaced by a monotonicity assumption.}
We denote by $\mathcal{F}_{t}$ the sigma-algebra generated by
$Z_{j}$, $-\infty<j\le t$. Furthermore, we use the notation $\mathbb{R}^d_+$ for the positive orthant endowed with the usual coordinate-wise partial ordering i.e. $x\le y$ for $x,y\in\mathbb{R}^d$ when each coordinate of $x$ is less than or equal to the corresponding coordinate of $y$. In what follows, $|x|_p=[|x_1|^p+\ldots+|x_d|^p]^{1/p}$ stands for the usual $l_p$-norm on $\mathbb{R}^d$.

\begin{prop}\label{Citthm}
Assume  that  $\{Z_i\}_{-\infty}^{\infty}$  is a stationary and ergodic sequence, and $F:\mathbb{R}^d_+\times\mathbb{R}^k\to\mathbb{R}^d_+$ is monotonic in its first argument:
\begin{align}
\label{M2}
F(x,z)\le F(x',z) \mbox{ if } x\le x',
\end{align}
For a fixed $1\le p <\infty$, suppose that there exist a constant $0\leq \rho<1$ and $K>0$ such that
\begin{align}
\label{Fo}
\EXP[|F(x,Z_{1})|_p\mid \mathcal{F}_{0}]\leq \rho |x|_p+K
\end{align}
almost surely for all $x\in\mathbb{R}^d$,  and
\begin{equation}\label{kezdoveges}
\EXP[|F(0,Z_0)|_p]<\infty.
\end{equation}
Then, the class $\{X_n(v), v\in\R^d\}$ is strongly stable.
Furthermore, the strong law of large numbers (SLLN) is satisfied.
\end{prop}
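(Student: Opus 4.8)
The plan is the Loynes-type construction: exploit the monotonicity \eqref{M2} so that the negative iterations $X^{(-m)}_0(0)$ increase to a limit $V^{*}$, use the drift \eqref{Fo} to show $\EXP|V^{*}|_p<\infty$ (hence $V^{*}$ is a.s.\ finite), and then verify (I), (II) and the SLLN.

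First I would show that $\{X^{(-m)}_0(0)\}_{m\ge 0}$ is coordinate-wise nondecreasing. Indeed $X^{(-m-1)}_{-m}(0)=F(0,Z_{-m})\ge 0=X^{(-m)}_{-m}(0)$, and applying the monotone map $F_0\circ\dots\circ F_{-m+1}$ to both sides gives $X^{(-m-1)}_0(0)\ge X^{(-m)}_0(0)$; the same works with any time $n$ in place of $0$. Hence for each $n$ the limit $X^{*}_n:=\lim_{m\to\infty}X^{(-m)}_n(0)$ exists a.s.\ in $[0,\infty]^{d}$, and writing $X^{(-m)}_0(0)=g_m(\mathbf Z)$ for the relevant measurable functional of the shift one gets $X^{*}_n=f(T^{n}\mathbf Z)$ with $f:=\lim_m g_m$.

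To see that $V^{*}:=X^{*}_0$ is a.s.\ finite and integrable I would run \eqref{Fo} along the iteration. Since $X^{(-m)}_{-1}(0)$ is $\mathcal{F}_{-1}$-measurable, the conditional form of the drift (obtained from \eqref{Fo} by stationarity and a standard freezing argument) gives $\EXP\bigl[\,|X^{(-m)}_0(0)|_p\mid\mathcal{F}_{-1}\,\bigr]\le\rho\,|X^{(-m)}_{-1}(0)|_p+K$ a.s.; taking expectations and using $X^{(-m)}_{-1}(0)\eD X^{(-m+1)}_0(0)$ yields $a_m\le\rho\,a_{m-1}+K$ for $a_m:=\EXP|X^{(-m)}_0(0)|_p$ (with $a_0=0$), hence $a_m\le K/(1-\rho)$ for all $m$. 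By monotone convergence $\EXP|V^{*}|_p\le K/(1-\rho)<\infty$, so $V^{*}<\infty$ a.s.\ and, by equivalence of norms, $\EXP|V^{*}|<\infty$. Letting $m\to\infty$ in $X^{(-m)}_{n+1}(0)=F(X^{(-m)}_n(0),Z_{n+1})$ and interchanging the increasing limit with $F(\cdot,Z_{n+1})$ --- this interchange needs $F(\cdot,z)$ to be continuous from below, which I would assume and which is satisfied in all the applications --- shows $X^{*}_{n+1}=F(X^{*}_n,Z_{n+1})$, so $\{X^{*}_n\}$ is a strong solution; being a measurable factor of the ergodic sequence $\mathbf Z$ it is stationary and ergodic, and since $X_0(V^{*})=V^{*}=X^{*}_0$ the recursion forces $X_n(V^{*})=X^{*}_n$. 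This proves (I); together with $\EXP|V^{*}|<\infty$ and the Remark after Theorem \ref{gen}, the SLLN then follows once (II) is established.

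For (II), let $V\ge 0$ be an arbitrary random vector and put $\bar V=V\vee V^{*}$, $\underline V=V\wedge V^{*}$ (coordinate-wise). Monotonicity of $F_n\circ\dots\circ F_1$ gives $X_n(\underline V)\le X_n(V)\le X_n(\bar V)$ and $X_n(\underline V)\le X_n(V^{*})\le X_n(\bar V)$, so, componentwise,
\[
|X_n(V)_j-X_n(V^{*})_j|\ \le\ \bigl(X_n(\bar V)_j-X_n(V^{*})_j\bigr)+\bigl(X_n(V^{*})_j-X_n(\underline V)_j\bigr);
\]
since $0\le\underline V\le V^{*}$, the last term is at most $X_n(V^{*})_j-X_n(0)_j$. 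Hence it suffices to prove $X_n(V^{*})-X_n(0)\to 0$ a.s.\ and $X_n(A)-X_n(V^{*})\to 0$ a.s.\ whenever $A\ge V^{*}$. For the first, using that $X_n(V^{*})=X^{*}_n$ is stationary and $X_n(0)\eD X^{(-n)}_0(0)$,
\[
\EXP\bigl[X_n(V^{*})_j-X_n(0)_j\bigr]\ =\ \EXP[V^{*}_j]-\EXP\bigl[X^{(-n)}_0(0)_j\bigr]\ \longrightarrow\ 0
\]
by monotone convergence (as $\EXP[V^{*}_j]<\infty$), so these nonnegative differences vanish in $L^{1}$. The main obstacle is upgrading this --- and the analogous statement for $A\ge V^{*}$ --- to almost-sure convergence: \eqref{Fo} controls expectations but not trajectories, so one must combine the geometric drift with Birkhoff's ergodic theorem (using that $X_n(V^{*})$ is stationary with finite mean and that the effect of the initial condition is pushed down geometrically) to conclude that the pathwise differences actually tend to zero. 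Carrying out this almost-sure coupling using only ergodicity --- rather than independence --- of the noise is where I expect the real difficulty to lie.
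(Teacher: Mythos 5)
Your construction of $V^{*}$ is exactly the paper's: monotonicity of $F(\cdot,z)$ makes the negative iterations $X^{(-m)}_{0}(0)$ nondecreasing, and the Foster--Lyapunov drift \eqref{Fo}, iterated and combined with the Beppo--Levi theorem, gives $\sup_m\EXP|X^{(-m)}_{0}(0)|_p\le K/(1-\rho)$, hence a finite a.s.\ limit. Your recursion $a_m\le\rho a_{m-1}+K$ via stationarity is a cosmetic variant of the paper's direct iteration $\EXP[|X^{(-n)}_{0}(0)|_p]\le\rho^{n}\cdot 0+K\sum_{j<n}\rho^{j}$. Your remark that passing to the limit in $X^{(-m)}_{n+1}(0)=F(X^{(-m)}_{n}(0),Z_{n+1})$ requires $F(\cdot,z)$ to be continuous from below is a genuine point: without it one only gets $X^{*}_{n+1}\le F(X^{*}_{n},Z_{n+1})$, so the identification $X_n(V^{*})=X^{*}_n$ (and hence stationarity of $X_n(V^{*})$) is not automatic. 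The paper uses this interchange silently; in Theorem \ref{gen} it is justified by Lipschitz continuity of $F(\cdot,z)$, but here only monotonicity is assumed.

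On (II) and the SLLN you stop short, and you should know that the paper does too: its proof of Proposition \ref{Citthm} ends once $\sup_n\EXP|X^{(-n)}_{0}(0)|_p<\infty$ is established, with no argument for $X_n(V)-X_n(V^{*})\to 0$ a.s. The contraction step that delivers (II) in Theorem \ref{gen} is unavailable here, and your sandwich $X_n(\underline V)\le X_n(V)\le X_n(\bar V)$ together with the computation $\EXP[X_n(V^{*})_j-X_n(0)_j]=\EXP[V^{*}_j]-\EXP[X^{(-n)}_{0}(0)_j]\to 0$ is exactly as far as the stated hypotheses take you: the drift controls expectations, not trajectories, and $L^{1}$-convergence of the nonnegative differences does not upgrade to a.s.\ convergence without extra structure (a renovation/coupling condition in the Borovkov--Foss sense, or, in the Galton--Watson application, the a.s.\ extinction of the subcritical difference process). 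So the difficulty you flag is real, but it is a gap in the source's own argument rather than a defect of your approach relative to it; for the part the paper actually proves, your proof is correct and follows the same route, with two legitimate refinements (the freezing argument for random $x$ in \eqref{Fo} and the continuity-from-below issue) made explicit.
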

Notice that \eqref{Fo} also implies, for all $t$,
\begin{align}
\label{Fo1}
\EXP[|F(x,Z_{t+1})|_p\mid \mathcal{F}_{t}]\leq \rho |x|_p+K
\end{align}
almost surely for all $x\in\mathbb{R}_+^d$. Borovkov \cite{Bor98}, Foss and  Tweedie \cite{FoTw98} and  Propp and Wilson \cite{PrWi96} studied the monotonic iteration under the fairly restrictive condition that the range of the iteration is a bounded set.

\begin{proof}
This proposition extends the Foster-Lyapunov stability argument to a non-Markovian setup. We apply
the notations in the proof of Theorem~\ref{itthm} such that verify the condition (\ref{Br1}) or,
equivalently, check (\ref{GAP1}). Since $F_n(\cdot):=F(\cdot,Z_n):\mathbb{R}_+^d\to\mathbb{R}_+^d$, $n\in\mathbb{Z}$ are order-preserving maps, and $F(0,Z_{-n})\ge 0$,
\begin{align*}
X^{(-n-1)}_{0}(0)-X^{(-n)}_{0}(0)
&=
X^{(-n)}_{0}(F(0,Z_{-n}))-X^{(-n)}_{0}(0) \\
&= F_0\circ\ldots F_{-n+1}(F(0,Z_{-n}))-F_0\circ\ldots F_{-n+1}(0)
\end{align*}
is non-negative, therefore $\left(X^{(-n)}_{0}(0)\right)_{n\in\mathbb{N}}$ is monotonically increasing and so (\ref{GAP1}) is verified.\\
As for (I), we need that $V^*=\lim_{n\to\infty} X^{(-n)}_{0}(0)$ takes finite values a.s., which would follow from $\EXP\{\sup_n|X^{(-n)}_{0}(0)|_p\}<\infty$. Easily seen that $x\le y$ implies $|x|_p\le |y|_p$ for $x,y\in\mathbb{R}_+^d$, and therefore by Beppo-Levi theorem and the monotonicity of $X^{(-n)}_{0}(0)$, $n\in\mathbb{N}$,
\begin{align*}
\EXP\{\sup_n |X^{(-n)}_{0}(0)|_p\}
&=
\sup_n\EXP\{|X^{(-n)}_{0}(0)|_p\}.
\end{align*}
From \eqref{Fo1} we get that
$$
\EXP[|X_{i+1}^{(-n)}(0)|_p\mid\mathcal{F}_{i}]=\EXP[|F(X_i^{(-n)}(0),Z_{i+1})|_p\mid\mathcal{F}_{i}]\leq \rho
|X_i^{(-n)}(0)|_p+K,
$$
$\ i\geq -n.$
Iterating this leads to
$$
\EXP[|X^{(-n)}_{0}(0)|_p]\leq \rho^{n}\EXP[|X_{-n}^{(n)}(0)|_p]+\sum_{j=0}^{n-1} K\rho^{j}\leq \frac{K}{1-\rho}<\infty,
$$
hence $\sup_n\EXP\{|X^{(-n)}_{0}(0)|_p\}$ is finite which completes the proof.
\end{proof}

\begin{remark}\label{multitipe}
The above Proposition holds with an analogous proof if we replace \eqref{Fo1} by
$$
\EXP[|F(x,Z_{t+1})|_p\mid \mathcal{F}_{t}]\leq \rho(t) |x|_p+K,
$$
where $\rho(t)$, $t\in\mathbb{Z}$ is a stationary process adapted to $\mathcal{F}_{t}$ and
$$
\limsup_{n\to\infty}\EXP^{1/n}\{\rho(1)\cdots \rho(n)\}<1.
$$

\end{remark}

\section{Generalized autoregression}\label{sec3}

In this section, $\| A\|$ will denote the operator norm of a matrix $A$.
Several authors investigated the iteration of matrix recursion:
set $X_n=X_n(v)$ such that $X_{0}=v$, and
\begin{align}
\label{arrec}
X_{n+1}=A_{n+1}X_{n}+B_{n+1},\quad n\ge 0,
\end{align}
where $\{(A_n,B_n)\}$ are i.i.d., $A_n$ is a $d\times d$ matrix and $B_n$ is a $d$ dimensional vector.
In this section we study a more general case of \eqref{arrec} where
the sequence $\{(A_n,B_n)\}$ is stationary and ergodic.
The minimal sufficient condition for the existence of a stationary solution
for \eqref{arrec} was proved in Brandt \cite{brandt}.
In the i.i.d. case, Bougerol and Picard \cite{BoPi92} showed
that those conditions are indeed minimal.


For the stationary and ergodic case, we now reprove the sufficiency part of Theorem 2.5 of Bougerol and Picard \cite{BoPi92}.

\begin{prop}
Assume that $\{(A_n,B_n)\}$ is stationary and ergodic such that
$\EXP \{\log^{+}\Vert A_{0}\Vert\}<\infty$ and $\EXP\{ \log^{+}|B_{0}|\} <\infty $.
If
\begin{equation}\label{lya}
\EXP\{\log ||A_{0}\ldots A_{-n}||\}<0
\end{equation}
for some $n$
then the class $\{X_n(v), v\in\R^d\}$ is strongly stable.
\end{prop}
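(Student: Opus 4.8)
The plan is to derive this proposition as a direct corollary of Theorem~\ref{gen} by recognizing that the affine recursion \eqref{arrec} is a special case of \eqref{itrec} with the driving sequence $Z_n := (A_n, B_n)$ taking values in $\mathbb{R}^k$ with $k = d^2 + d$, and the map $F(x, (A,B)) := Ax + B$. What remains is simply to check that conditions (i) and (ii) of Theorem~\ref{gen} follow from the hypotheses $\EXP\{\log^+ \|A_0\|\} < \infty$, $\EXP\{\log^+ |B_0|\} < \infty$, and \eqref{lya}.

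First I would compute $\|F_n\|$ in the sense of the Lipschitz seminorm defined in the paper. Since $F_n(x) = A_n x + B_n$, we have $|F_n(x) - F_n(y)| = |A_n(x-y)| \le \|A_n\| \, |x-y|$, and this is sharp, so $\|F_n\| = \|A_n\|$ (the Lipschitz seminorm coincides with the operator norm of the linear part). Consequently $(\log \|F_1\|)^+ = (\log \|A_1\|)^+ = \log^+\|A_0\|$ in distribution, which is integrable by hypothesis; this verifies (i). Similarly, the composition $F_n \circ \cdots \circ F_1$ is the affine map $x \mapsto A_n \cdots A_1 x + (\text{some vector})$, so its Lipschitz seminorm is exactly $\|A_n \cdots A_1\|$. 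By stationarity, $\EXP\{\log \|A_n \cdots A_1\|\} = \EXP\{\log \|A_0 \cdots A_{-n+1}\|\}$, and the hypothesis \eqref{lya} (which is stated for $A_0 \cdots A_{-n}$, i.e.\ a product of $n+1$ matrices, but the index is immaterial after shifting) gives that this expectation is negative for some $n$. This verifies (ii).

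The only remaining point is condition (i) of Theorem~\ref{gen} implicitly needs $\EXP\{(\log\|F_1\|)^+\} < \infty$, which I have handled, but I should double-check the application: Theorem~\ref{gen} as stated requires only (i) and (ii), with no integrability assumption on $|F(0,Z_1)|$ itself (unlike Proposition~\ref{itthm}). Looking back at the proof of Theorem~\ref{gen}, the term $|F(0, Z_{-1})| = |B_{-1}|$ enters through the bound \eqref{**}, where the argument only uses that $\lambda \ln^+ |F(0,Z_{-1})|$ is integrable for all $\lambda > 0$ --- and indeed $\EXP\{\log^+ |B_0|\} < \infty$ gives exactly this after scaling (the Borel--Cantelli step in the proof uses precisely $\ln^+$-integrability, not $L^1$-integrability of $|B_0|$). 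So the hypothesis $\EXP\{\log^+ |B_0|\} < \infty$ is the right condition and matches what the proof of Theorem~\ref{gen} actually consumes.

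I do not anticipate a serious obstacle here; the proposition is genuinely a corollary. The one place requiring mild care is the identification $\|F_n\| = \|A_n\|$: one must confirm that the supremum $\sup_{x \ne y} |A_n(x-y)|/|x-y|$ over the Euclidean norm equals the operator norm $\|A_n\|$, which is immediate from the definition of operator norm, but worth stating since the paper switches notation (using $\|A\|$ for the operator norm in Section~\ref{sec3} versus the Lipschitz seminorm $\|g\|$ in Section~\ref{sec2}). The write-up should be short: set up the correspondence, verify (i) and (ii), invoke Theorem~\ref{gen}, done.
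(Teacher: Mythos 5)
Your proposal is correct and follows essentially the same route as the paper: the paper's proof likewise identifies $\Vert F_n\circ\dots\circ F_1\Vert$ with $\Vert A_n\cdots A_1\Vert$ and reduces the claim to Theorem~\ref{gen} by checking (i) and (ii). Your extra observation --- that $\EXP\{\log^+|B_0|\}<\infty$ is consumed not by hypothesis (i) but by the $\ln^+|F(0,Z_{-1})|$ integrability step inside the proof of Theorem~\ref{gen} (a condition the theorem's statement leaves implicit) --- is accurate and in fact more careful than the paper's one-line justification.
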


\begin{proof}
Since $\Vert A_{n}A_{n-1}\dots A_{1} \Vert$ corresponds to $\Vert F_{n} \circ \dots \circ F_{1} \Vert$, we
can verify
the conditions of Theorem \ref{itthm} easily: \eqref{lya} implies $(ii)$ and the integrability
conditions of our Proposition imply $(i)$.
\end{proof}

\begin{remark}
Note that the argument above showed up first in  F\"urstenberg and Kesten \cite{FuKe60}.
They proved that if $\EXP \{\log^{+}\Vert A_{0}\Vert\}<\infty$, then
\begin{align*}
\lim_n \frac 1n \log \Vert A_{n}A_{n-1}\dots A_{1} \Vert = E
\end{align*}
a.s., where $E$ stands for the Lyapunov exponent.
\end{remark}

In the rest of this section we recall results about a similar iteration which are based on negative iterations
(though they cannot be treated by our results in this paper). Let us consider
the stochastic gradient method for least-squares regression,
when there are given observation sequences of random, symmetric and positive semi-definite  $d\times d$ matrices $A_{n}$,
and random $d$-dimensional vectors $V_{n}$ such that
\[
A = \EXP(A_{n})
\]
and
\[
V = \EXP(V_{n})
\]
$(n=0,\pm 1,\pm 2,\ldots )$.
If $A^{-1}$ exists then the aim is to estimate
\[
\vartheta  = A^{-1}V.
\]
For this reason, a stochastic gradient algorithm
with constant gain  is introduced:
set $X_{0}=v$, and
\begin{align}
\label{rec}
X_{n+1}=X_{n}-\lambda (A_{n+1}X_{n}-V_{n+1}),\quad n\ge 0,
\end{align}
followed by an averaging:
\[
\bar X_{n}= \frac 1n \sum^{n}_{i=1} X_{i}.
\]

If $\lambda$ depends on $n$, then $\bar X_{n}$ is called averaged stochastic approximation introduced by
Polyak \cite{Pol90} and Ruppert \cite{Rup88}.
If  the sequence $\{(A_{n},\, V_{n})\}_{-\infty}^{\infty}$ is i.i.d., then they proved the optimal rate of convergence of $\bar X_{n}$ to $\vartheta$.

\textcolor{black}{Gy\"orfi and  Walk \cite{GyWa96} studied the  case of dependent $\{(A_{n},\, V_{n})\}_{-\infty}^{\infty}$:}
Assume  that the sequence $\{(A_{n},\, V_{n})\}_{-\infty}^{\infty}$  is stationary and ergodic
such that  $\EXP \Vert A_{n} \Vert<\infty$, $\EXP | V_{n}| <\infty $ and $A$ is positive definite.
Then there is $\lambda _{0}>0$ such
that for all $0<\lambda <\lambda _{0}$
there exists a stationary and ergodic process $\{X_n^*\}_{-\infty}^{\infty}$ satisfying the recursion (\ref{rec}) and
\begin{align*}
\lim_n (X_n-X_n^*)=0
\end{align*}
a.s. Moreover,
\begin{align*}
\lim_n \bar X_n=\vartheta+\delta_{\lambda}
\end{align*}
a.s. with an asymptotic bias vector $\delta_{\lambda}$.
In \cite{GyWa96} there is a 3-dependent example of $\{(A_{n},\, V_{n})\}_{-\infty}^{\infty}$, where $\delta_{\lambda}\ne 0$.
Furthermore, under suitable mixing condition on $\{(A_{n},\, V_{n})\}_{-\infty}^{\infty}$, $|\delta_{\lambda}|$ is of order $\sqrt{\lambda}$.

\section{Lindley process}\label{sec4}

We recall some results from queuing theory. They do not follow from arguments of the present paper
but they are also based on negative iterations hence they provide one more illustration
for the usefulness of this technique.

\textcolor{black}{For $d=k=1$, consider the following iteration:
set $X_n=X_n(v)$ such that $X_{0}=v\geq 0$,}
\begin{align*}
X_{n+1}=(X_{n}+Z_{n+1})^+.
\end{align*}

The next proposition is an extension of the concept of strong stability. Let $\{X'_i\}_{0}^{\infty}$ be a stationary and ergodic sequence. Recall that $\{X_i\}_{0}^{\infty}$ is \emph{forward coupled} with $\{X'_i\}_{0}^{\infty}$, if there is a random index $\tau$ such that for all $n>\tau$, we have $X'_n=X_n$.

\begin{prop}
\label{GyMo}
(Gy\"orfi and  Morvai \cite{GyMo02}.)
Assume, that  $\{Z_i\}_{-\infty}^{\infty}$ is a stationary and ergodic sequence with $\EXP\{Z_1\}<0$.
Put
\begin{align*}
V^*=\sup_{j\le 0}(Z_j+\dots +Z_0)^+
\end{align*}
and
\begin{align*}
X'_n=X_n(V^*).
\end{align*}
Then $\{X'_i\}_{0}^{\infty}$ is stationary and ergodic and
$\{X_i\}_{0}^{\infty}$ is forward coupled with $\{X'_i\}_{0}^{\infty}$.\hfill $\Box$
\end{prop}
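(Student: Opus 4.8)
The plan is to follow the Loynes-type construction underlying Theorem \ref{gen}, but to exploit the explicit closed form available for the one-dimensional Lindley map $F(x,z)=(x+z)^+$ in place of a contraction estimate. First I would unroll the negative iteration \eqref{iLLrec}: starting at time $-m$ from $0$, repeated use of the identity $(a^{+}+b)^{+}=\max(a+b,b,0)$ gives, by induction on $n$,
\begin{align*}
X_n^{(-m)}(0)=\max_{-m\le j\le n}\Big(\sum_{i=j+1}^{n}Z_i\Big),
\end{align*}
the term $j=n$ contributing the empty sum $0$. In particular $X_0^{(-m)}(0)=\max\big(0,Z_0,Z_{-1}+Z_0,\dots,Z_{-m+1}+\dots+Z_0\big)$, which is nondecreasing in $m$ and hence converges to $V^{*}=\sup_{j\le0}(Z_j+\dots+Z_0)^{+}$. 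To see that $V^{*}$ is finite a.s., I would apply Birkhoff's ergodic theorem to the stationary ergodic sequence $\{Z_{-i}\}_{i\ge0}$: since $\EXP\{Z_1\}<0$, the sums $Z_0+Z_{-1}+\dots+Z_{-m+1}$ divided by $m$ converge to $\EXP\{Z_1\}<0$, so they tend to $-\infty$ and the supremum defining $V^{*}$ is attained and finite.

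For property (I), the point is that the Lindley recursion is a deterministic function of the initial value and of $\mathbf{Z}=\{Z_i\}$, so $X_n(V^{*})$ coincides with $X_n^{*}:=\lim_{m\to\infty}X_n^{(-m)}(0)$; unrolling as above gives $X_n^{*}=\sup_{j\le n}(Z_j+\dots+Z_n)^{+}=\psi(T^{n}\mathbf{Z})$, where $\psi(\mathbf{Z})=\sup_{j\le0}(Z_j+\dots+Z_0)^{+}=V^{*}$ is a fixed measurable functional of the coordinates with index $\le0$ and is a.s. finite by the previous step. Since $\mathbf{Z}$ is stationary and ergodic, the factor sequence $\{\psi(T^{n}\mathbf{Z})\}_{n\ge0}=\{X_n(V^{*})\}_{n\ge0}$ is stationary and ergodic, which is exactly (I).

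For the forward coupling (which in particular yields (II)), the plan is to separate the effect of the initial condition from the rest. Unrolling \eqref{itrec} from an a.s. finite $v\ge0$ gives
\begin{align*}
X_n(v)=\max\big(v+S_n,\ M_n\big),\qquad S_n:=Z_1+\dots+Z_n,\qquad M_n:=\max_{1\le j\le n}\Big(\sum_{i=j+1}^{n}Z_i\Big)\ge0,
\end{align*}
and the same identity holds with $v$ replaced by $V^{*}$. By Birkhoff's theorem again $S_n/n\to\EXP\{Z_1\}<0$ a.s., so $S_n\to-\infty$; since $v$ and $V^{*}$ are finite a.s., the random index $\tau:=\sup\{n\ge1:(v\vee V^{*})+S_n>0\}$ (with $\tau:=0$ if this set is empty) is a.s. finite, and for every $n>\tau$ we have $v+S_n\le0\le M_n$ and $V^{*}+S_n\le0\le M_n$, hence $X_n(v)=M_n=X_n(V^{*})$. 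This is precisely the forward coupling assertion, and a fortiori $X_n(v)-X_n(V^{*})\to0$ a.s.

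The computations above are routine; the one genuinely model-specific point — and the reason the drift/contraction machinery of Section \ref{sec2} cannot be applied directly — is the final step. The reflection at $0$ in $F(x,z)=(x+z)^{+}$ is exactly what forces the two trajectories to agree \emph{identically}, not merely asymptotically, once the initial-condition term $v+S_n$ has fallen below $0$ and is thereafter "forgotten". For this $F$ the natural Foster--Lyapunov bound only gives $\rho=1$ in \eqref{Fo}, so one really does need an argument tailored to the hypothesis $\EXP\{Z_1\}<0$; that is where I would expect the only subtlety to lie.
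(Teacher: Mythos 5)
Your proof is correct. Note that the paper itself states Proposition \ref{GyMo} without proof (it is quoted from Gy\"orfi and Morvai \cite{GyMo02}, and the surrounding text explicitly says these queueing results ``do not follow from arguments of the present paper''), so there is no in-paper argument to compare against; what you have written is the classical Loynes construction --- the explicit representation $X_n(v)=\max(v+S_n,\,M_n)$, monotone convergence of the negative iterations to $V^*$, finiteness of $V^*$ via Birkhoff applied to the backward sums, and coupling once $S_n\to-\infty$ kills the initial-condition term --- which is exactly the route taken in the cited source. Your closing observation is also apt: the reflection at $0$ is what upgrades the conclusion from mere convergence of the difference to genuine forward coupling, which the contraction and Foster--Lyapunov machinery of Section \ref{sec2} cannot deliver here.
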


Theorem 4.1 in Diaconis and  Freedman \cite{DiFr99} is about this iteration, when $\{Z_i\}_{-\infty}^{\infty}$  is i.i.d.
It is noted there that the condition $\EXP\{Z_1\}<0$ can be weakened to
\begin{align*}
\sum_{j=-\infty}^0\frac{\PROB\{Z_j+\dots ,Z_0>0\} }{j}<\infty.
\end{align*}
We guess that this observation is valid for the ergodic case, too.

As an application of Proposition \ref{GyMo} consider the extension of the G/G/1 queueing model.
Let $X_n$ be the waiting time of the $n$-th arrival, $S_n$ be the service time
of the $n$-th arrival, and $T_{n+1}$ be the inter arrival time between the $(n+1)$-th and $n$-th arrivals.
Then, we get the recursion
\begin{align*}
X_{n+1}=(X_{n}-T_{n+1}+S_n)^+.
\end{align*}

\textcolor{black}{Loynes \cite{Loy62}} and Gy\"orfi and  Morvai \cite{GyMo02} studied the generalized G/G/1, where either the arrival times, or the service times, or both are not memoryless, see also the books by
Asmussen \cite{Asm03},
Baccelli and  Br\'emaud \cite{BaBr03},
Borovkov \cite{Bor76}, \cite{Bor98},
Brandt, Franken and Lisek \cite{BrFrLi90},
Ganesh,  O'Connell and  Wischick \cite{GaOCWi04}.

Proposition \ref{GyMo} implies that if $\{ Z_{i}\}:=\{S_{i-1}-T_i\}_{-\infty}^{\infty}$ is a stationary
and ergodic sequence with $\EXP\{S_0\}<\EXP\{T_1\}$, then $\{X_i\}_{0}^{\infty}$ is forward coupled with a
stationary and ergodic $\{X'_i\}_{0}^{\infty}$.

\section{The Langevin iteration}\label{sec5}

\textcolor{black}{For a measurable function $H: \R^d\times \R^m\to \R^d$,
the Langevin iteration is defined as follows:}
set $X_n=X_n(v)$ such that $X_{0}=v$, and
\begin{equation}
\label{LLrec}
X_{n+1}=X_{n}-\lambda H(X_n,Y_n)+\sqrt{2\lambda}N_{n+1},\quad n\ge 0,
\end{equation}
where $\{Y_i\}_{-\infty}^{\infty}$ and $\{N_i\}_{-\infty}^{\infty}$ are random sequences.
In the literature of stochastic approximation, $ \lambda >0 $ is called step size, while in machine learning.
it is called learning rate.

The simplest case is where the sequences $N_n$ and $Y_n$ are independent, $Y_n$ is i.i.d.\ and $N_n$ is i.i.d.\
standard $d$-dimensional Gaussian.
This algorithm was introduced in Welling and Teh \cite{wt} and later analysed by
a large corpus of literature which we cannot review here. It is called ``stochastic gradient Langevin
dynamics'' and it can be used for sampling from high-dimensional, not necessarily log-concave distributions
and for finding the global minimum of high-dimensional functionals.
In this context,
$Y_n$ repesents a data sequence (obtained, for instance, by averaging a big dataset over randomly chosen
minibatches), $N_n$ is artificially added noise to guarantee that the process does not get stuck near
local minima.

The case where the data sequence $Y_n$ may be a dependent stationary process (but $N_n$ is still i.i.d.\ standard Gaussian)
has been treated less extensively:
see Dalalyan \cite{dalalyan} and Bakhagen et al. \cite{6} for the convex and Chau et al. \cite{5} for the non-convex settings.

Another stream of literature, starting from Hairer \cite{hairer}, concentrated on
stochastic differential equations driven by coloured Gaussian noises.
The discrete-time case of difference equations was treated in Varvenne \cite{varvenne}.
This setting corresponds to the case where in \eqref{LLrec}, $Y_n$
is constant and $N_n$ is a dependent Gaussian sequence.

We know of no studies so far that allowed \emph{both} $Y_n$ and $N_n$ to be only stationary.
We manage to establish strong stability in this case, under reasonable assumptions.

Defining
\[
F(x,z)=x-\lambda H(x,y)+\sqrt{2\lambda}u,\ z=(y,u)
\]
and $Z_{i+1}:=(Y_i,N_{i+1})$, Proposition \ref{itthm} implies the following:
\begin{corollary}
Assume  that the sequence $\{(Y_i,N_{i+1})\}_{-\infty}^{\infty}$ is stationary and ergodic,
and for a $\lambda _{0}>0$ and for all $0<\lambda <\lambda _{0}$
\[
\EXP|-\lambda H(0,Y_1)+\sqrt{2\lambda}N_{2}|<\infty,
\]
and
\[
|x-\lambda H(x,z)-(x'-\lambda H(x',z))|\le K_z|x-x'|
\]
with (\ref{con}).
Then,  for all $0<\lambda <\lambda _{0}$,
the class $\{X_n(v), v\in\R^d\}$ is strongly stable.
\end{corollary}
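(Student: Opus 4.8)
The plan is to recognize this corollary as a direct instantiation of Proposition \ref{itthm}, so that essentially no new work is needed beyond verifying the hypotheses in the present notation. First I would fix $0<\lambda<\lambda_0$ and set $F(x,z)=x-\lambda H(x,y)+\sqrt{2\lambda}\,u$ with $z=(y,u)$, together with the driving sequence $Z_{i+1}:=(Y_i,N_{i+1})$. Since $\{(Y_i,N_{i+1})\}_{-\infty}^\infty$ is assumed stationary and ergodic, the sequence $\{Z_i\}_{-\infty}^\infty$ is stationary and ergodic, which is the standing assumption of Proposition \ref{itthm}. Denote by $\mu$ the law of $Z_1=(Y_0,N_1)$.

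Next I would check the two quantitative hypotheses of Proposition \ref{itthm}. For the integrability of $F(0,\cdot)$: one has $F(0,z)=-\lambda H(0,y)+\sqrt{2\lambda}\,u$, so $\int |F(0,z)|\,\mu(dz)=\EXP|-\lambda H(0,Y_0)+\sqrt{2\lambda}\,N_1|$, and by stationarity this equals $\EXP|-\lambda H(0,Y_1)+\sqrt{2\lambda}\,N_2|$, which is finite by the first displayed assumption of the corollary. For the Lipschitz/contraction hypothesis: for any $x,x'$ and $z=(y,u)$ we have $F(x,z)-F(x',z)=\bigl(x-\lambda H(x,y)\bigr)-\bigl(x'-\lambda H(x',y)\bigr)$, so the assumed bound $|x-\lambda H(x,z)-(x'-\lambda H(x',z))|\le K_z|x-x'|$ gives exactly $|F(x,z)-F(x',z)|\le K_z|x-x'|$, and the hypothesized inequality \eqref{con}, namely $\EXP\{\log K_{Z_1}\}<0$, is precisely condition \eqref{con} of Proposition \ref{itthm}. (Here I should note the minor bookkeeping point that in the corollary $K_z$ is phrased in terms of a single argument $z$; one reads $z$ as the pair $(y,u)$, and since $F$ depends on its second argument only through $y$, the constant in fact depends only on $y$, but this is immaterial.)

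Having verified all hypotheses of Proposition \ref{itthm} in the translated notation, I would conclude that for this value of $\lambda$ the class $\{X_n(v),\,v\in\R^d\}$ generated by the recursion $X_{n+1}=F(X_n,Z_{n+1})$ — which unwinds to exactly the Langevin iteration \eqref{LLrec} with $X_{n+1}=X_n-\lambda H(X_n,Y_n)+\sqrt{2\lambda}\,N_{n+1}$ — is strongly stable. Since $\lambda\in(0,\lambda_0)$ was arbitrary, this holds for all such $\lambda$, completing the proof. There is essentially no obstacle here: the only thing to be careful about is the index shift $Z_{i+1}=(Y_i,N_{i+1})$, making sure the recursion for $X_n$ expressed through $F$ and $Z_{n+1}$ reproduces \eqref{LLrec} verbatim, and the trivial observation that stationarity and ergodicity of the pair sequence passes to $\{Z_i\}$. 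Everything substantive — the almost sure convergence of the negative iteration via the subadditive/Furstenberg–Kesten argument and the coupling bound for property (II) — is already contained in Theorem \ref{gen} and hence in Proposition \ref{itthm}.
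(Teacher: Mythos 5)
Your proposal is correct and follows exactly the paper's own route: the paper proves this corollary precisely by defining $F(x,z)=x-\lambda H(x,y)+\sqrt{2\lambda}u$ with $z=(y,u)$ and $Z_{i+1}:=(Y_i,N_{i+1})$ and then invoking Proposition \ref{itthm}, which is what you do. Your additional bookkeeping (the stationarity shift identifying $\EXP|F(0,Z_1)|$ with the stated moment condition, and reading $K_z$ as depending on $z$ only through $y$) is accurate and merely makes explicit what the paper leaves implicit.
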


\begin{remark}
\label{abo}
Next, we prove the convergence of the iterative scheme \eqref{LLrec} assuming only that $H$ satisfies
\begin{equation}
\label{eq:sconv}
\left\langle
\partial_1 H (x,y)v,v
\right\rangle
\ge
m (y) | v |^2
\quad
\text{and}
\quad
|
\partial_1 H (x,y)
|
\le
M(y)
\end{equation}
with measurable $m,M:\R^{m}\to [0,\infty)$.
(This is a parametric form of the usual \emph{strong convexity condition}. One can replace it by a so-called \emph{dissipativity} condition and hence extend the analysis beyond the convex case. However, this direction requires a different technology.) 	
We introduce $g(t)=H(t x'+(1-t)x,y)$, and thus we can write
$$
|F(x,y)-F(x',y)|^2 =| x-x'|^2
-2\lambda\left\langle
x-x',g(0)-g(1)
\right\rangle
+
\lambda^2 |g(0)-g(1)|^2,
$$
where
$$
g(0)-g(1) = \int_0^1 \partial_1 H (t x' + (1-t)x,y)(x-x')\,\mathrm{d}t.
$$
Using \eqref{eq:sconv}, we estimate $| g(0)-g(1)|\le M_y |x-x'|$
and
$\left\langle x-x',g(0)-g(1)\right\rangle\ge m_y |x-x'|^2$,
and arrive at
$$
|F(x,y)-F(x',y)|
\le
\left(
1+\lambda^2 M_y^2 -2\lambda m_y
\right)^{1/2}
|x-x'|.
$$
In \eqref{eq:sconv}, without the loss of generality, we can assume that $\mathbb{E}(m_{Z_1})<\infty$.
Furthermore, requiring $\mathbb{E}(M_y^2)<\infty$, we can set $\lambda$ such that conditions of Proposition \ref{itthm},
i.e \eqref{con}, are satisfied.
It is also not restrictive to assume that $\mathbb{E}(m_{Z_1})^2<\mathbb{E}(M_{Z_1}^2)$.
Since $K_{Z_1} = \left(1+\lambda^2 M_{Z_1}^2 -2\lambda m_{Z_1}\right)^{1/2}$, Jensen's inequality implies
$$\mathbb{E}(K_{Z_1})\le \left(1+\lambda^2 \mathbb{E} (M_{Z_1}^2) -2\lambda \mathbb{E}(m_{Z_1})\right)^{1/2}
< 1
$$
whenever $\lambda<\frac{2\mathbb{E}(m_{Z_1})}{\mathbb{E} (M_{Z_1}^2)}$.
\end{remark}

\begin{remark}
Strong convexity is a usual assumption in the stochastic gradient Langevin dynamics
literature, see \cite{dalalyan,eric,6}. Remark \ref{abo} above shows that our results
cover this case. It would, however, be nice to weaken this condition to dissipativity
(see \cite{5,lovas}). It seems that such a generalization requires much more
advanced techniques, see e.g.\ \cite{varvenne}.
\end{remark}

\begin{remark}
	Note that the iteration (\ref{rec}) in Section \ref{sec3} is a special case of the Langevin iteration \eqref{LLrec} such that
	\begin{equation}\label{buxt}
	H(X_{n},A_{n+1})=A_{n+1}X_{n}
	\end{equation}
	with the difference that for \eqref{buxt} only convexity holds and not strong convexity.
	In a least-squares regression setup it is important that $A_n$ is assumed positive semi-definite only
	and not necessarily positive definite.
\end{remark}

\section{Generalized multi-type Galton--Watson process}\label{sec6}

We follow the notation of Kevei and Wiandt \cite{kevei}.
{A $d$-type Galton--Watson branching process with immigration (GWI process)}
\[
\bX_n = ( X_{n,1} , \ldots, X_{n,d} ), \quad n \in \ZZ_+ ,
\]
is defined as
\[
\begin{cases}
\bX_n = \sum_{j=1}^{X_{n-1,1}} \bA_{n,j,1}+ \ldots + \sum_{j=1}^{X_{n-1,d}} \bA_{n,j,d}
+ \bB_n , \qquad n \geq 1 , \\
\bX_0 = v ,
\end{cases}
\]
where $v\in\mathbb{N}^{d}$,
$\{ \bA_{n,j,i}, \, \bB_n : n, j \in \NN , \, i \in \{1,\dots,d\} \}$
are  random vectors with non-negative integer coordinates.
Here $X_{n,i}$ is the number of $i$-type  individuals in the $n^\mathrm{th}$ generation
of a population, $\bA_{n,j,i}$ is the vector of the number of offsprings produced by the $j^\mathrm{th}$
individual of type $i$ belonging to the
$(n-1)^\mathrm{th}$ generation, and $\bB_n$ is the vector of the number of
immigrants.

Let $\bC_n:=\{ \bA_{n,j,i} :  j \in \NN , \, i \in \{1,\dots,d\} \}$.
In the standard setup  the families of random variables
$\{ \bC_n : n \in \NN\}$ and $\{ \bB_n : n \in \NN\}$ are independent and
$(\bC_n, \, \bB_n)$, $n \in \NN$ is a sequence of independent vectors.
The process $\{ \bX_n : n \in \NN\}$ is called homogeneous, when $(\bC_n, \, \bB_n)$, $n \in \NN$ are identically distributed, otherwise it is inhomogeneous.
In this section we study the generalization of the homogeneous case, when $\{ Z_{n}:=(\bC_n, \, \bB_n) : n \in \NN \}$ is
a stationary and ergodic process. As before, we extend this stationary process to the timeline $\mathbb{Z}$.
We furthermore assume for each $i$ that $\bA_{1,j,i}$ has, for each $j\in\mathbb{N}$, the same conditional law with
respect to $\mathcal{F}_{0}$.

Note that the state space of $Z_{n}$ is $\mathbb{R}^{\mathbb{N}}$. It can easily be checked that all the
arguments of our paper apply for such state spaces, too.

Homogeneous multi-type GWI processes has been introduced and studied by
Quine \cite{Quine, Quine2}. In \cite{Quine} necessary and sufficient condition is
given for the existence of stationary distribution in the subcritical case.
A complete answer is given by Kaplan \cite{kaplan}. Also Mode \cite{mode} gives
a sufficient condition for the existence of a stationary distribution, and
in a special case he shows that the limiting distribution is
a multivariate Poisson process with independent components.

Branching process models are extensively used in various parts of natural sciences,
in biology, epidemiology, physics, computer science, among other subjects.
In particular, multi-type GWI processes were used
to determine the asymptotic mean and covariance matrix of deleterious
and mutant genes in a stationary population by  Gladstien and Lange \cite{GlLa78},
and in non-stationary population by Lange and Fan \cite{LaFa97}.
Another rapidly developing area where multi-type GWI processes can be applied
is the theory of polling systems.
Resing \cite{Res} pointed out that a large variety of polling models can be
described as a multi-type GWI process.
Resing \cite{Res}, van der Mei \cite{Mei}, Boon \cite{boon1}, Boon et al. \cite{boon2}
and Altman, Fiems \cite{alt} investigated several communication protocols applied in
info-communication networks with differentiated services.
There are different quality of services,
for example, some of them are delay sensitive
(telephone, on-line video, etc.), while others tolerate some delay (e-mail, internet,
downloading files, etc.). Thus, the services are grouped into service classes such
that each class has an own transmission protocol like priority queueing. In the papers
mentioned above the $d$-type Galton--Watson process has been used, where the process was
defined either by the sizes of the active user populations of the $d$ service classes,
or by the length of the $d$ priority queues.
For the general theory and applications of multi-type Galton--Watson processes
we refer to Mode \cite{mode} and Haccou et al.~\cite{HJV}.

Define the random row vectors $m_i:=\EXP[\mathbf{A}_{1,1;i}\vert\mathcal{F}_0]$, $i=1,\ldots,d$,{}
where $\mathcal{F}_{t}$ is the sigma-algebra generated by $Z_{j}$, $-\infty<j\leq t$.
Note that, by our assumptions, $m_{i}$ has the same law as
$\EXP[\mathbf{A}_{n+1,j;i}\vert\mathcal{F}_n]$ for each $n$ and $j$.
For $x\in\mathbb{R}^d$ we will use the $\ell_1$-norm $|x|_1:=\sum_{j=1}^d |x_i|$
where $x_i$ is the $i$th coordinate of $x$.

\begin{prop}
If
\begin{equation}
\label{mos}
\max_{1\leq i\leq d}|m_i|_1\leq\varrho,
\end{equation}
almost surely for some constant $\varrho<1$; $\EXP\{|\mathbf{B}_{1}|_{1}\}<\infty$,
then
$\mathbf{X}_{t}=\mathbf{X}_{t}(v)$ is strongly stable and SLLN holds.	
\end{prop}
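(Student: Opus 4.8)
The plan is to recast the GWI recursion as an iterated random function and then apply Proposition~\ref{Citthm} with $p=1$. Writing $z=(\mathbf{c},\mathbf{b})$ with $\mathbf{c}=\{\mathbf{a}_{j,i}:j\in\mathbb{N},\ 1\le i\le d\}$, set
\[
F(x,z)=\sum_{i=1}^{d}\sum_{j=1}^{x_{i}}\mathbf{a}_{j,i}+\mathbf{b},\qquad x\in\mathbb{N}^{d},
\]
so that $\mathbf{X}_{n+1}=F(\mathbf{X}_{n},Z_{n+1})$ with $Z_{n+1}=(\mathbf{C}_{n+1},\mathbf{B}_{n+1})$; the range of $F$ lies in $\mathbb{N}^{d}\subset\mathbb{R}^{d}_{+}$, and the state space $\mathbb{R}^{\mathbb{N}}$ of the driving variable is harmless, as noted just before the statement. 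I would first dispose of the two easy hypotheses of Proposition~\ref{Citthm}. Monotonicity \eqref{M2} is immediate: for $x\le x'$ in $\mathbb{N}^{d}$ we have $F(x',z)-F(x,z)=\sum_{i=1}^{d}\sum_{j=x_{i}+1}^{x'_{i}}\mathbf{a}_{j,i}\ge 0$ because offspring vectors have non-negative coordinates. And $F(0,Z_{0})=\mathbf{B}_{0}$, so \eqref{kezdoveges} is precisely the assumption $\EXP\{|\mathbf{B}_{1}|_{1}\}<\infty$ (using stationarity).

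The substantive point is the drift condition \eqref{Fo} with $p=1$. Fix $x\in\mathbb{N}^{d}$. Since every vector occurring has non-negative coordinates, $\EXP[\,\cdot\mid\mathcal{F}_{0}]$ commutes with $|\cdot|_{1}$; combining this with the common $\mathcal{F}_{0}$-conditional law of the $\mathbf{A}_{1,j;i}$ ($j\in\mathbb{N}$) and with \eqref{mos},
\[
\EXP[\,|F(x,Z_{1})|_{1}\mid\mathcal{F}_{0}]=\sum_{i=1}^{d}x_{i}\,|m_{i}|_{1}+\EXP[\,|\mathbf{B}_{1}|_{1}\mid\mathcal{F}_{0}]\le\varrho\,|x|_{1}+\EXP[\,|\mathbf{B}_{1}|_{1}\mid\mathcal{F}_{0}].
\]
This is \eqref{Fo} except that the additive term is the random variable $\EXP[\,|\mathbf{B}_{1}|_{1}\mid\mathcal{F}_{0}]$ rather than a deterministic constant $K$ --- the one spot where Proposition~\ref{Citthm} cannot be quoted verbatim. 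But its proof uses the drift only to produce, after taking \emph{unconditional} expectations and iterating from $X^{(-n)}_{-n}(0)=0$, the bound $\EXP\{|X^{(-n)}_{0}(0)|_{1}\}\le\sum_{j=0}^{n-1}\varrho^{j}\,\EXP\{|\mathbf{B}_{1}|_{1}\}\le\EXP\{|\mathbf{B}_{1}|_{1}\}/(1-\varrho)$, uniformly in $n$, where $\EXP\{\EXP[\,|\mathbf{B}_{j+1}|_{1}\mid\mathcal{F}_{j}]\}=\EXP\{|\mathbf{B}_{1}|_{1}\}$ by stationarity. So the conclusion survives: monotonicity of $F$ makes $\bigl(X^{(-n)}_{0}(0)\bigr)_{n}$ coordinatewise non-decreasing, hence a.s.\ convergent to some $V^{*}$, and Beppo--Levi gives $\EXP\{|V^{*}|_{1}\}\le\EXP\{|\mathbf{B}_{1}|_{1}\}/(1-\varrho)<\infty$. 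This establishes part (I) of Definition~\ref{def1} with $X_{n}(V^{*})=f(T^{n}\mathbf{Z})$ stationary and ergodic; the SLLN will follow from the integrability of $V^{*}$ once (II) is available, by the remark after the SLLN definition.

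For (II), $\mathbf{X}_{n}(v)-\mathbf{X}_{n}(V^{*})\to 0$ a.s., I would either invoke Proposition~\ref{Citthm} directly or give a self-contained branching argument that in fact yields forward coupling. Fix $v\in\mathbb{N}^{d}$; since $v\wedge V^{*}\le v,\,V^{*}\le v\vee V^{*}$ coordinatewise, monotonicity of the iteration gives $|\mathbf{X}_{n}(v)-\mathbf{X}_{n}(V^{*})|_{1}\le|\mathbf{X}_{n}(v\vee V^{*})-\mathbf{X}_{n}(v\wedge V^{*})|_{1}=:|D_{n}|_{1}$, and $D_{n}$ satisfies the immigration-free recursion obtained by retaining only the offspring of the $D_{n}$ surplus individuals; using once more the common conditional offspring law and \eqref{mos} one gets $\EXP[\,|D_{n+1}|_{1}\mid\mathcal{F}_{n},D_{n}]\le\varrho\,|D_{n}|_{1}$, hence $\sum_{n}\EXP\{|D_{n}|_{1}\}\le\EXP\{|D_{0}|_{1}\}/(1-\varrho)<\infty$ (note $\EXP\{|D_{0}|_{1}\}\le|v|_{1}+\EXP\{|V^{*}|_{1}\}<\infty$). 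Being $\mathbb{Z}_{+}$-valued, $D_{n}$ vanishes for all large $n$ a.s., so $\{\mathbf{X}_{i}(v)\}$ is forward coupled with $\{\mathbf{X}_{i}(V^{*})\}$; this gives (II) and, together with the integrability of $V^{*}$, the SLLN. The main obstacle is the drift step: the $\ell_{1}$-bookkeeping for the infinite-dimensional driving variable together with the conditional offspring means, and the (minor) gap between the random additive term above and the constant $K$ required by Proposition~\ref{Citthm}.
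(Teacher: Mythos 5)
Your proposal is correct and follows essentially the same route as the paper: recast the GWI recursion as a monotone iterated random function, verify the drift condition \eqref{Fo} with $p=1$ via the conditional offspring means and \eqref{mos}, check \eqref{kezdoveges} from $F(0,Z_1)=\mathbf{B}_1$, and conclude by Proposition \ref{Citthm}. Two of your refinements go beyond the paper's write-up and are worth keeping: first, you correctly observe that the additive term in the drift bound is the random variable $\EXP[\,|\mathbf{B}_1|_1\mid\mathcal{F}_0]$ rather than a deterministic constant $K$ (the paper silently writes the unconditional expectation here), and your repair --- that the proof of Proposition \ref{Citthm} only uses the drift after taking unconditional expectations, where stationarity restores the constant bound $\EXP\{|\mathbf{B}_1|_1\}/(1-\varrho)$ --- is exactly the right fix; second, your self-contained sandwich-and-coupling argument for part (II), exploiting that the surplus process $D_n$ is an immigration-free subcritical branching process with integer values, supplies an explicit verification of (II) (indeed of forward coupling) that the paper leaves implicit in its appeal to Proposition \ref{Citthm}.
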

\begin{proof} Define
$$
F(x,z):=\sum_{j=1}^d\sum_{i=1}^{x_j}z_{i;j} +z_0,
$$
where
$$
z_{i;j}\in\mathbb{N}^{d},\ i\in\mathbb{N},1\leq j\leq d,\quad z_{0}\in\mathbb{N}^{d}.
$$
Note that this iteration is monotone.
As already defined, the stationary process will be
$$
Z_n=((\mathbf{A}_{n,i;j})_{1\leq j\leq d,i\in\mathbb{N}},\mathbf{B}_n).
$$

Let us check \eqref{Fo} with $p=1$:
\begin{eqnarray*}
& & \EXP[|F(x,Z_1)|_1\vert\mathcal{F}_0]\leq\sum_{j=1}^d \sum_{i=1}^{x_j}\EXP[|\mathbf{A}_{1,i;j}|_1\vert\mathcal{F}_0] +
\EXP[|\mathbf{B}_1|_1]\\
&\leq&  \sum_{j=1}^d x_j \varrho +\EXP[|\mathbf{B}_1|_1]= \varrho |x|_1+\EXP[|\mathbf{B}_1|_1].
\end{eqnarray*}
Note also that $\EXP[|F(0,Z_{1})|]=\EXP[|\mathbf{B}_{1}|_{1}]<\infty$, as required by \eqref{kezdoveges}.
We may conclude from Proposition \ref{Citthm}.
\end{proof}


\textcolor{black}{\begin{remark} In the case where the sequence $\mathbf{A}_{n,\cdot;\cdot}$, $n\in\mathbb{N}$ is i.i.d.\ one has
$m_i=\EXP[\mathbf{A}_{1,1;i}\vert\mathcal{F}_0]=\EXP[\mathbf{A}_{1,1;i}]$. In that case, the standard
assumption is that the matrix $M$ composed from row vectors $m_i$ satisfies
\begin{equation}\label{sept}
\varrho(M)<1,
\end{equation}
where $\varrho(M)$ denotes sepctral radius,
see \cite{kevei}. Our \eqref{mos} is stronger than \eqref{sept}. In arguments for i.i.d.
$\mathbf{A}_{n,\cdot;\cdot}$ the general
case \eqref{sept} can be easily reduced to \eqref{mos}. However, it is not clear to us how to do it in the current,
non-independent setting. Perhaps the techniques of Theorem 4 in \cite{DeTr21} could be adapted.
\end{remark}}

\section*{Acknowledgement}

We would like to express our sincere gratitude to the anonymous reviewer for the thorough and insightful evaluation of our paper. The careful examination not only identified errors that needed correction but also provided valuable suggestions that significantly enhanced the overall quality of the paper. We would also like to extend our thanks to the editorial team of the Journal of Applied Probability for facilitating this constructive review process. The thoughtful feedback received has undoubtedly strengthened the clarity and rigor of our work.

\end{document}